

\documentclass[12pt]{amsart}

\usepackage{mymathmacros}
\usepackage{formatmacros}
\usepackage{analysismacros}
\usepackage{alphamacros}
\usepackage{linalgmacros}
\usepackage{refmacros}

\usepackage{pst-plot, pst-math, pst-func}
\usepackage{pstricks-add}

\begin{document}

\title[Anosov Structures in the Plane]{Foliations and Conjugacy: \\ Anosov Structures in the Plane}
\author{Jorge Groisman}
	\address{Instituto de Matem\'atica y Estad\'{\i}stica ÒProf. Ing. Rafael LaguardiaÓ, 
	Facultad de Ingenier\'{\i}a Julio Herrera y Reissig 565 11300, MONTEVIDEO, Uruguay}
	\email{jorge.groisman@gmail.com}
\author{Zbigniew Nitecki}
	\address{Department of Mathematics, Tufts University, Medford, MA 02155}
	\email{zbigniew.nitecki@tufts.edu}
\thanks{Both authors thank Lluis Alseda and the Department of Mathematics at the Universitat Autonoma de Barcelona 
for  
hospitality and support during the summer of 2012, when this project was started.
The first author thanks the Mathematics Department at Tufts University 
for its hospitality and support during a visit in May 2013, and IMERL for its support.
The second author thanks IMERL for its hospitality and support during a visit in March 2013, 
and the Faculty Research Awards Committee at Tufts for a grant-in-aid that helped support his travel to Montevideo 
during that visit.
}
\keywords{Anosov diffeomorphism, non-compact dynamics, plane foliations}
\subjclass{37D, 37E}

\begin{abstract}
In a non-compact setting, the notion of hyperbolicity, 
and the associated structure of stable and unstable manifolds (for unbounded orbits),
is highly dependent on the choice of metric used to define it.  We consider the simplest version of this,
the analogue for the plane of Anosov diffeomorphisms, studied earlier by W. White and P. Mendes.
The two known topological conjugacy classes of such diffeomorphisms are linear hyperbolic automorphisms
and translations.  We show that if the structure of stable and unstable manifolds is required to be preserved
by these conjugacies, the number of distinct equivalence classes of Anosov diffeomorphisms 
in the plane becomes infinite.
\end{abstract}
\newcommand{\muprod}[2]{\ensuremath{\langle #1, #2 \rangle_{\mu}}}
\newcommand{\distmu}[2]{\dists{\mu}{#1}{#2}}
\newcommand{\elsmuof}[1]{\elsof{\mu}{#1}}
\newcommand{\Foliation}{\ensuremath{\mathcal{F}}}
\newcommand{\Fsup}[1]{\ensuremath{{\Foliation}^{#1}}}
\newcommand{\Ffs}{\ensuremath{\Fsup{s}}}
\newcommand{\Ffu}{\ensuremath{\Fsup{u}}}
\newcommand{\metric}{\ensuremath{\mu}}
\newcommand{\mlen}[1]{\ensuremath{\norm{#1}_{\mu}}}
\newcommand{\fnof}[1]{\ftoof{n}{#1}}
\newcommand{\angleprod}[2]{\ensuremath{\langle #1, #2 \rangle}}
\renewcommand{\interior}[1]{\ensuremath{\mathrm{int}\ #1}}
\newcommand{\calA}{\ensuremath{\mathcal{A}}}
\newcommand{\calAs}[1]{\ensuremath{\calA_{#1}}}
\newcommand{\VC}{\ensuremath{V_{\Curve}}}
\newcommand{\calVC}{\ensuremath{\calV_{\Curve}}}
\newcommand{\access}[2]{\ensuremath{{\mathcal{N}\left(#1,#2\right)}}}
\newcommand{\prolong}{\mathcal{J}}
\newcommand{\prolim}[2]{\ensuremath{\prolong_{#1}\left(#2\right)}}
\newcommand{\vstable}{\ves{e}{s}}
\newcommand{\vunstable}{\ves{e}{u}}

\maketitle


\section{Introduction}\label{sec:intro}
Diffeomorphisms on compact manifolds satisfying a global hyperbolicity condition, or \emph{Anosov diffeomorphisms},
have been very extensively studied in the past fifty years.  The hyperbolicity condition implies the existence of a transverse
pair of foliations by stable and unstable manifolds, with serious dynamic consequences 
(transitivity, density of periodic points, etc.).  The analogous condition in a non-compact setting does not in general 
imply such consequences.  A striking illustration of this difference is Warren White's construction \cite{White} 
of a complete Riemannian metric on the plane \Realstwo{} for which the translation $(x,y)\mapsto(x+2,y)$ 
is hyperbolic, although in every possible sense there is no recurrence at all.  White's example prompted Pedro Mendes
\cite{Mendes} to ask whether, at least in dimension 2, White's example together with the obvious example of a linear
hyperbolic automorphism of \Realstwo{} gives all possible Anosov diffeomorphisms (of \Realstwo) 
up to topological conjugacy.  This paper reports on an unsuccessful attempt to answer Mendes' question.
In the process of studying this problem, we formulate a stronger equivalence relation between Anosov diffeomorphisms
which takes into account the structure of the stable and unstable foliations, and find a wealth of examples 
of Anosov diffeomorphisms of \Realstwo{} which are not equivalent in this sense.

In a compact setting, the existence of a splitting in the tangent bundle implies the existence of stable 
and unstable foliations; Mendes asks whether this is also true in the setting of \Realstwo.  We have not attempted
to answer this question, as standard techniques for proving stable manifold theorems involve uniform estimates on 
the deviation between a diffeomorphism and its linearization at a point, something which is easily established 
in a compact setting but not in general in the plane.  Instead, we take as our starting definition the existence of 
stable and unstable foliations, in keeping with the definition adopted by Mendes in \cite{Mendes}.

\begin{definition}\label{dfn:Anosov}
	An \deffont{Anosov structure} on \Realstwo{} for a  diffeomorphism \selfmap{f}{\Realstwo} consists of  
	a complete Riemannian metric \metric{} on \Realstwo{} and 
	\begin{description}
		\item[Stable and Unstable Foliations] two continuous foliations \Ffs{} and \Ffu{} with \Cr{1} leaves
		respected by $f$: the image of a leaf of \Ffs{} \resp{\Ffu} is again a leaf of \Ffs{} \resp{\Ffu};
		\item[Hyperbolicity] there exist constants $C$ and  $\lam>1$ such that for any positive integer $n$
		and any vector \vv{} tangent to a
		leaf of \Ffu{}.
		\begin{equation*}
			\mlen{Df^{n}(\vv)}\geq C\lam^{n}\mlen{\vv}
		\end{equation*}
		while for any vector \vv{} tangent to a leaf of \Ffs{}
		\begin{equation*}
			\mlen{Df^{n}(\vv)}\leq C\lam^{-n}\mlen{\vv}
		\end{equation*}
		where \mlen{\vv} denotes the length of a vector using the metric \metric.
	\end{description}
\end{definition}
 We shall use the adjectives \emph{Anosov}, \emph{stable} and \emph{unstable} in the natural way:  
 a diffeomorphism is \deffont{Anosov} if it has an  Anosov structure; the leaf of \Ffs{} \resp{\Ffu} through a point is
 its \deffont{stable} \resp{\deffont{unstable}} \deffont{leaf}.  
 
 On a compact manifold, all metrics are uniformly equivalent, which means
 that if there exists an Anosov structure for $f$, the same foliations  together with any other metric will, 
 with an adjustment of the
 constants $C$ and \lam, also form an Anosov structure for $f$. Furthermore, the stable leaf through
 a point $x$ is its stable manifold, in the sense that it consists of all points $y$ for which the \metric-distance between
\fnof{x} and \fnof{y} converges to zero as $n\to\infty$ (with the analogous property 
with respect to ``backward time'' for points on the 
unstable leaf).  In particular, the foliation can be recognized in terms of the topological dynamics of the system.
This all disappears when we move to noncompact settings:  by switching to a metric with a different uniform structure, 
we can find Anosov structures  using different foliations, and other metrics
which do not support any Anosov structure (see \refer{thm}{equiv}). 
We stress also that our metric is assumed to be complete to avoid cheap pathologies.

While Mendes posed his question in terms of topological conjugacy, it seems more appropriate to regard the foliation
as part of the structure of an Anosov diffeomorphism.  Accordingly, we propose to study the following strengthening 
of topological conjugacy:

\begin{definition}\label{dfn:conj}
	Two Anosov structures of the plane, with respective Anosov diffeomorphisms \Rtransf{f}{2} and \Rtransf{g}{2},
	are \deffont{equivalent} if there exists a homeomorphism \Rtransf{h}{2} conjugating
	$f$ and $g$ ($h\compose f=g\compose h$)  which takes the stable \resp{unstable} foliation of $f$ 
	to the stable \resp{unstable} foliation of $g$.  The homeomorphism $h$ will be referred to as 
	a \deffont{foliated conjugacy} between $f$ and $g$.
\end{definition}

Our focus in this paper is on identifying a variety of equivalence classes of Anosov structures on the
plane.  Mendes showed in \cite{Mendes} that an Anosov diffeomorphism of the
plane has at most one non-wandering point; in particular, 
a linear hyperbolic automorphism of \Realstwo{} has the origin as
its unique non-wandering point, while a translation has none.  Our examples are all topologically conjugate to one
of these specific examples but as we shall see they represent an infinite family of Anosov structures 
with no foliated conjugacies between them.

\section{New Examples of Anosov structures}\label{sec:examples}

Recall that as a consequence of the Riemann mapping theorem, any open subset of \Realstwo{} which is homeomorphic 
to a disc (say, to the open unit disc) is actually diffeomorphic to all of \Realstwo.  We will refer to any such subset
of \Realstwo{} as an \deffont{open disc} in \Realstwo.
Our strategy for creating new examples
will be to consider open discs which are mapped onto themselves by the linear hyperbolic map
\begin{equation*}
	\Tof{x,y}=(2x, y/2).
\end{equation*}
Note that any hyperbolic linear automorphism of \Realstwo{} is linearly conjugate to this example, so picking this particular
automorphism presents no loss of generality.  If \gof{x,y} is a positive real function which is constant along orbits of 
$T$, then a new metric can be defined by scaling the tangent space at each point by this function: the invariance of
$g$ insures that in this metric (just as in the Euclidean one) 
all horizontal \resp{vertical} vectors are stretched by a factor of 2 \resp{shrunk by a factor of \half}.
If we can define the scaling function on an invariant open disc in a way that renders the resulting metric complete, then 
this metric together with the horizontal \resp{vertical} foliations (intersected with our disc) 
defines an Anosov 
structure for the restriction of $T$ to this disc, and 
any diffeomorphism from the disc onto \Realstwo{} conjugates $T$ with a transformation of the plane, with
an Anosov structure given by the image of the horizontal and vertical foliations of the disc.

We start by constructing two basic examples, one containing the origin, the other not containing the origin.

Note that the function 
\begin{equation*}
	\tauof{x,y}=xy
\end{equation*}
is invariant under the linear transformation
\begin{equation*}
	T:(x,y)\mapsto\left(2x,\frac{y}{2}\right).
\end{equation*}

\textbf{First example (not containing origin):} Consider the open set
\begin{equation}\label{eqn:band}
	\calU\eqdef\setbld{(x,y)\in\Realstwo}{x>0,\quad \recip{x}<y<\frac{2}{x}}.
\end{equation}
This is an open disc, invariant under $T$, and containing no fixed points of $T$.  
Define a Riemann metric on \calU{} by setting the new inner product of two vectors \vv{} and \vw{} at (x,y) to be
\begin{equation}\label{eqn:metric}
	\angleprod{\vv}{\vw}=(\gof{x,y})^{2}(\vv\cdot\vw)
\end{equation}
where 
\begin{equation*}
	\gof{x,y}=\recip{xy-1}+\recip{2-xy}=\recip{(xy-1)(2-xy)}
\end{equation*}
and the dot product is the usual (Euclidean) inner product.
Of course, $g$ is the composition with $\tau$ of the function \vphiof{t} defined on \opint{1}{2} by
\begin{equation*}
	\vphiof{t}=\recip{(t-1)(2-t)}
\end{equation*}
and as such is $T$-invariant.  Furthermore,
\begin{itemize}
	\item \vphiof{t} is unimodal, with a minimum value of $\vphiof{\frac{3}{2}}=4$ 
	and diverging monotonically to $+\infty$ at the ends
	of the interval;
	\item Any improper integral involving the endpoints diverges to $+\infty$:
	\begin{equation*}
		\int_{1}^{\frac{3}{2}}\vphiof{t}\dt=\int_{\frac{3}{2}}^{2}\vphiof{t}\dt=+\infty.
	\end{equation*}
\end{itemize}

\begin{lemma}\label{lem:complete}
	The metric on the open set \calU{} (\refer{eqn}{band}) defined by \refer{eqn}{metric} is complete.
\end{lemma}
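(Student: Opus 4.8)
The plan is to use the standard criterion that a complete Riemannian metric can be recognized by the fact that every \emph{divergent} curve---one that eventually leaves every compact subset of \calU{}---has infinite length. Accordingly, I would fix a piecewise-\Cr{1} curve $\gamma(s)=(x(s),y(s))$, $s\in[0,1)$, which leaves every compact subset of \calU{}, and show that its \metric-length
\[
L_{\metric}(\gamma)=\int \vphiof{\tauof{x,y}}\sqrt{(x')^2+(y')^2}\,ds
\]
is infinite. The compact subsets of \calU{} are precisely the closed sets on which $x$ is bounded and bounded away from $0$ and on which \tauof{x,y} is bounded away from the endpoints $1$ and $2$; hence a divergent curve must escape in at least one of these ways, and I would split into two cases according to whether $x$ is controlled.

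\textbf{Case A} is that $x$ is unbounded or $\liminf x=0$. If $x$ is unbounded then, since $x(0)$ is finite, the total variation of $x$ is infinite, so the Euclidean length $L_{\mathrm{euc}}(\gamma)=\int\sqrt{(x')^2+(y')^2}\,ds\ge\int|x'|\,ds$ is infinite; if instead $\liminf x=0$, then along a subsequence $y=\tauof{x,y}/x>1/x\to\infty$, so the total variation of $y$ is infinite and again $L_{\mathrm{euc}}(\gamma)=\infty$. In either subcase I would finish by invoking the minimum value $4$ of \vphiof{}, giving $L_{\metric}(\gamma)\ge 4\,L_{\mathrm{euc}}(\gamma)=\infty$.

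\textbf{Case B} is that $\delta\le x\le M$ for constants $0<\delta\le M$. Then $y=\tauof{x,y}/x$ is also bounded, so $\sqrt{x^2+y^2}\le R$ for some $R$, and divergence now forces \tauof{x,y} to approach one of the endpoints $1$ or $2$: the curve must eventually leave every band $1+\epsilon\le\tauof{x,y}\le 2-\epsilon$, and by continuity it cannot jump between the two components of the complement. Writing $\tau=\tauof{x,y}$, so that $\tau'=x'y+y'x$, Cauchy--Schwarz gives $|\tau'|\le\sqrt{x^2+y^2}\,\sqrt{(x')^2+(y')^2}$, whence
\[
L_{\metric}(\gamma)\ge\frac1R\int\vphiof{\tau}\,|\tau'|\,ds\ge\frac1R\Bigl|\int\vphiof{\tau}\,\tau'\,ds\Bigr|=\frac1R\,\bigl|\Phi(\tau(s))-\Phi(\tau(0))\bigr|,
\]
where $\Phi$ is an antiderivative of \vphiof{}. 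Since $\int_1\vphiof{t}\,dt=\int^2\vphiof{t}\,dt=+\infty$, the function $\Phi$ tends to $\mp\infty$ at the two endpoints, so the right-hand side diverges as $\tau\to1$ or $\tau\to2$, yielding $L_{\metric}(\gamma)=\infty$.

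I expect the main obstacle to be the coupling between the radial escape of Case A and the approach to the hyperbolic boundary of Case B: the blow-up of \vphiof{} is felt only in the $\tau$-direction, while a curve may move in any direction, so the Cauchy--Schwarz step that converts the change in $\tau$ into a lower bound for arc length works only once the factor $\sqrt{x^2+y^2}$ has been bounded---which is exactly why disposing of the unbounded cases first is essential. A secondary point to pin down carefully is the claim that a divergent curve confined to $\delta\le x\le M$ genuinely limits onto a single endpoint value of $\tau$, which is what makes $\Phi(\tau(s))$ run off to infinity.
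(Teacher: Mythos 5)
Your argument is correct, and it reaches the conclusion by a genuinely different route from the paper. The paper works at the level of Cauchy sequences: it first observes that $\vphiof{t}\geq 4$ forces a \metric-Cauchy sequence to be Euclidean-Cauchy, hence convergent in $\overline{\calU}$, and then rules out a boundary limit by localizing to the compact set \mDs{a}, invoking a bi-Lipschitz constant for the coordinate change $(x,y)\mapsto(x,\tauof{x,y})$, and bounding the length of a connecting curve from below by a lower Riemann sum for $C\int_{\alpha}^{\beta}\vphiof{t}\,dt$. You instead use the divergent-curve characterization of completeness and split according to whether the curve escapes in the Euclidean sense or approaches the hyperbolic boundary. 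Your Case A (handled by $\vphiof{}\geq 4$ alone) plays the role of the paper's first reduction, while your Case B replaces the bi-Lipschitz/Riemann-sum machinery with the single pointwise inequality $\abs{\tau'}\leq\sqrt{x^{2}+y^{2}}\,\sqrt{(x')^{2}+(y')^{2}}$, after which the lower bound $\frac{1}{R}\abs{\Phi(\tau(s))-\Phi(\tau(0))}$ comes from an exact antiderivative rather than a limiting sum. This buys two things: the constant is explicit (no unquantified bi-Lipschitz constant), and the estimate holds along the entire curve within the band $\delta\leq x\leq M$, so you never need to argue that connecting curves stay inside a preferred compact fundamental domain. The only point you should state precisely is the one you already flag: divergence in Case B gives a sequence $s_{n}\to 1$ with $\tauof{\gamma(s_{n})}$ tending to $1$ or to $2$ (not necessarily convergence of $\tau$ along the whole curve), and that suffices because $L_{\metric}(\gamma)\geq\frac{1}{R}\abs{\Phi(\tau(s_{n}))-\Phi(\tau(0))}\to\infty$.
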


\begin{proof}
To this end, we note first that since the (Euclidean) length of every vector is multiplied by at least $\vphiof{\frac{3}{2}}=4$,  
a sequence of points
in \calU{} which is Cauchy in the new metric is also Cauchy in the Euclidean metric, and hence converges in \Realstwo{}
to a point of the closure of \calU.  If it converges to a point of \calU, then since $g$ is locally bounded (near the limit point),
it converges there in the new metric.  It remains to show that no sequence which is Cauchy in the new metric can converge
in \Realstwo{} to a point on the boundary of \calU.  We prove this by contradiction.

Suppose $\ps{i}\in\calU$ converge to a point $q=(\xso,\yso)\in\bdry\calU$.  
Note that $\xso>0$ and $\tauof{\xso,\yso}=1\text{ or }2$.

Pick $a>0$ so that \xso{} is between $a$ and $2a$, and consider the 
compact set
\begin{equation}\label{eqn:fund}
	\mDs{a}\eqdef\setbld{(x,y)}{a\leq x\leq 2a,\quad\recip{x}\leq y\leq\frac{2}{x}}
\end{equation}
which is the intersection of the closed vertical ``band'' $\clint{a}{2a}\times\Reals$ with the closure of \calU.
Since $\ps{i}\to q$, eventually these points all lie in \mDs{a}.  
Now, the map $(x,y)\mapsto(x,\tauof{x,y})$ is a \Cr{\infty} diffeomorphism taking \mDs{a}{} onto the 
rectangle \rect{a}{2a}{1}{2}.  
By compactness of \mDs{a}, it is bi-Lipschitz, so for some positive constant $C$, the (Euclidean) 
distance between  points in \mDs{a}{} is bounded below by $C$ times the (Euclidean) distance 
between the corresponding points in the rectangle, 
which is  bounded below by the difference between their $\tau$-values.  

If $z,\zp\in\interior{\mDs{a}}$, let $\tauof{z}=\alpha$ and $\tauof{\zp}=\beta=\alpha+D$. 
Then for any curve \gam{}  from $z$ to \zp{} in \interior{\mDs{a}} parametrized by (Euclidean) arc length, we can pick points
\qs{i}, \rs{i}, $i=1,..,n$ in \gam{}  so that 
\begin{itemize}
	\item $\tauof{\qs{i}}=\alpha+\frac{i-1}{n}D$
	\item $\tauof{\rs{i}}=\alpha+\frac{i}{n}D$
	\item Along the segment \gams{i} of \gam{} from \qs{i} to \rs{i}, $\tau$ is always between the two endpoint values.
\end{itemize} 
Then the Euclidean length of \gams{i} is bounded below by $C\delof{t}$ where $\delof{t}=\frac{D}{n}$, and since its velocity vector is multiplied by
at least $\vphiof{ \tauof{\qs{i}}}=\vphiof{\alpha+\frac{i-1}{n}D}$ in measuring the new length of \gams{i}, we see that 
the new length of \gams{i} is bounded below by  $C\vphiof{\alpha+(i-1)\delof{t}}\delof{t}$, and the new length of \gam{} is
bounded below by $C\sum_{i=1}^{n}\vphiof{\alpha+(i-1)\delof{t}}\delof{t}$, which is the lower sum for
\begin{equation*}
	C\int_{\alpha}^{\beta}\vphiof{t}\dt.
\end{equation*}

Applying this to a subsequence of \ps{i} for which $\tau$ is strictly increasing, we see that the new distance from \ps{1} to
\ps{j} goes to infinity as $j\to\infty$, contradicting the assumption that the \ps{i} were Cauchy in the new metric.
\end{proof}

\textbf{Second Example (containing the origin):} As a disc containing the origin, we take
\begin{equation}\label{eqn:hyp}
	\calV\eqdef\setbld{(x,y)\in\Realstwo}{\abs{\tauof{x,y}}<1}.
\end{equation}
This is an open disc bounded by the two hyperbolas $\tauof{x,y}=\pm1$.  
The construction of the new metric is completely analogous to the previous case: we use \refer{eqn}{metric} but with
the defining function \vphiof{t} changed:  we now take
\begin{equation}\label{eqn:newphi}
	\vphiof{t}\eqdef\recip{1-t}+\recip{t+1}=\frac{2}{1-t^{2}}.
\end{equation}
The function \gof{x,y} takes its minimum value $g=2$ along the coordinate axes, 
and goes to infinity at the boundary of \calV.

To show the analogue of \refer{lem}{complete}, we again note that since $\gof{x,y}\geq2$ for any point of \calV{},
any sequence which is Cauchy in the new metric is also Cauchy in the Euclidean metric, and hence converges to
a point in the closure of \calV{}.  We need to show that a sequence which converges to a point on one of the two curves 
$\tauof{x,y}=\pm1$ cannot be Cauchy in the new metric.  We sketch the proof if a sequence converges
to a point on the boundary in the first quadrant: $q=(\xso,\yso)$ with $\xso>0$ and $\tauof{\xso,\yso}=1$:
we replace the fundamental domain \mDs{a} defined in \refer{eqn}{fund} by
\begin{equation*}
	\tmDs{a}\eqdef\setbld{(x,y)}{a\leq x\leq 2a,\quad-1\leq \tauof{x,y}\leq1}
\end{equation*}
and repeat the argument for \refer{lem}{complete}.

In both of these constructions, the fact that $\gof{\Tof{x,y}}=\gof{x,y}$ means that for any vector \vv{} at a point $(x,y)$, the
ratio of lengths between it and its image is the same for the new metric as the old one, and so the new metric 
(together with the horizontal and vertical foliations) provides
an Anosov structure for the restriction of $T$ to \calU{} \resp{to \calV}.  But \calU{} \resp{\calV} is an open disc, and 
hence by the Riemann mapping theorem there is a diffeomorphism of \calU{} \resp{\calV} onto the whole plane;  this
conjugates $T$ with some diffeomorphism $F$ of \Realstwo{} onto itself, and the push-forward of the new metric from our
subset to \Realstwo{} together with the images of the horizontal and vertical foliations of that set, 
yields an Anosov structure for $F$.  
Note in particular that the first case of our construction (\calU{} excludes the origin) provides an alternate construction
of a fixed point-free diffeomorphism of the plane which is Anosov.  

In fact, by their nature, these two constructions do not give counterexamples to Mendes' original conjecture: 
the fixed point-free construction yields a diffeomorphism which is conjugate to a translation and the one containing the
origin yields a conjugate of a linear hyperbolic transformation.  However, neither example has a \emph{foliated} conjugacy with the standard examples on the whole plane.
We formulate this as

\begin{theorem}\label{thm:equiv}
	\begin{enumerate}
		\item There exists an Anosov structure on the plane whose underlying diffeomorphism
		is a linear hyperbolic automorphism of \Realstwo{}, but for which
		the stable and unstable foliations cannot both be mapped to the standard foliations (by
		horizontal and vertical lines) for the linear hyperbolic map.
		
		\item There exists an Anosov structure on the plane whose underlying diffeomorphism is 
		topologically conjugate to the translation $(x,y_)\to(x+1,y)$ on \Realstwo, but whose stable
		foliation is not homeomorphic the the stable foliation in White's example.
	\end{enumerate}
\end{theorem}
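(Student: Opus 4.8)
The plan is to exhibit, for each part, a topological invariant of the foliated structure that distinguishes our constructed examples from the standard ones. The key observation is that a foliated conjugacy must carry leaves to leaves homeomorphically and respect the global arrangement of leaves, so any property of the \emph{leaf space} or of individual leaves that is invariant under homeomorphism can serve as an obstruction. For part (1), I would compare the example built on \calV{} with the standard linear hyperbolic automorphism of \Realstwo. In the standard picture the stable foliation is by horizontal lines and the unstable by vertical lines: every leaf is a properly embedded copy of \Reals{} running the full width (or height) of the plane, and the two foliations are globally transverse with a single common point of intersection (the origin) being the unique fixed point. In our \calV{}-example, after pushing forward to \Realstwo{} by the Riemann map, the horizontal and vertical foliations of \calV{} become the stable and unstable foliations; I would argue that the leaves inherited from segments of horizontal \resp{vertical} lines intersected with the region $\abs{\tauof{x,y}}<1$ have a genuinely different incidence pattern with respect to the two hyperbolic boundary curves, so that the pair of foliations cannot be simultaneously straightened to the horizontal/vertical model.

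The cleaner route, which I expect to be the one that actually works, is to look at how individual leaves accumulate, i.e. at the behavior of the foliations ``at infinity.'' First I would note that a foliated conjugacy $h$ preserves stable and unstable leaves and conjugates the dynamics, so it preserves the asymptotic behavior of leaves under forward and backward iteration of the map. In the standard linear model, a typical stable leaf and a typical unstable leaf are unbounded straight lines whose only interaction is the single crossing at the fixed point; in particular no stable leaf accumulates on another stable leaf. In the \calV{}-construction, because the domain is pinched between two hyperbolas, the horizontal and vertical leaves near the boundary are forced to behave differently near the ends, and I would identify a concrete invariant—for instance, whether a stable leaf and an unstable leaf can share \emph{two} distinct ideal endpoints on the boundary circle at infinity, or whether there exist leaves that are asymptotic to each other—that is present in one model and absent in the other. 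Establishing such an asymptotic separation property carefully, and checking it is a homeomorphism invariant of the foliated pair, is the technical core.

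For part (2), the target of comparison is White's translation example rather than the linear one. Here I would again pass to the \calU{}-construction, whose underlying map is conjugate to a translation, and compare its stable foliation with the stable foliation of White's example. The plan is to extract a topological invariant of the stable foliation \emph{as an abstract foliation of the plane}—plausibly the topology of its leaf space, or the way leaves limit onto one another (the structure of the ``separatrices'' or the non-Hausdorffness of the leaf space). Since \calU{} is a thin curvilinear band pinched along the hyperbola $xy=1$ and $xy=2$, the intersections of vertical lines with \calU{} produce a stable foliation whose leaf space and accumulation pattern I would show differs from White's; a homeomorphism carrying one stable foliation to the other would have to preserve this invariant, giving the required non-equivalence.

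The main obstacle I anticipate is \emph{pinning down the right invariant and proving it is genuinely invariant}, rather than the construction of the examples (which is already done via the completeness lemma and the push-forward). The difficulty is that homeomorphisms of the plane are very flexible, so naive candidates such as ``leaves are straight'' or metric quantities are useless; the invariant must be purely topological and must be about the \emph{global configuration} of the two foliations together with the conjugacy relation. I would expect to spend most of the effort verifying that the chosen asymptotic/leaf-space property is preserved under foliated conjugacy and then computing it on both sides to confirm the discrepancy. A secondary technical point will be handling the Riemann-map diffeomorphism carefully enough that the asymptotic behavior of leaves in \calU{} \resp{\calV} transfers to a well-defined statement about the push-forward foliations on all of \Realstwo.
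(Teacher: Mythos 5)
Your submission is a research plan rather than a proof: in both parts you defer the essential step --- committing to a specific invariant of foliated conjugacy and actually verifying that it takes different values on the two structures --- and the candidate you favor for part (1) would not survive that verification. You propose to distinguish the $\calV$-example from the standard linear model by asymptotic data ``at infinity,'' e.g.\ whether leaves share ideal endpoints on a boundary circle. But an arbitrary homeomorphism of \Realstwo{} does not act on any circle at infinity (the plane has a single end), so ``ideal endpoint'' is not even defined, let alone invariant, without imposing extra structure that a foliated conjugacy need not respect. The invariant that actually works is the one you mention only in passing and then abandon: the incidence pattern of the two foliations. In the standard structure \emph{every} stable leaf meets \emph{every} unstable leaf (every vertical line meets every horizontal line), whereas in the $\calV$-structure the stable leaf through a point $(\xso,\yso)$ off the axes is a vertical arc pinched between the hyperbolas and therefore misses the unstable leaves through all sufficiently distant points. ``These two leaves intersect'' is manifestly preserved by any homeomorphism carrying foliations to foliations, so this single observation finishes part (1). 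A second gap there: the statement requires the underlying diffeomorphism to \emph{be} a linear hyperbolic automorphism, and a generic Riemann map of $\calV$ onto \Realstwo{} only produces something topologically conjugate to $T$. One must exhibit a homeomorphism of $\calV$ onto \Realstwo{} conjugating $T|_{\calV}$ with $T$ itself, e.g.\ $(x,y)\mapsto(x,\psiof{xy}\abs{y})$ for a suitable increasing $\psi$; you never address this.

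For part (2) your leaf-space idea is in the right spirit but again uncomputed: you assert that the accumulation pattern of the stable foliation of the $\calU$-example ``differs from White's'' without saying how, and that claim is the entire content of the theorem. A concrete route (the one the paper takes) is via Brouwer's translation theorem: every stable leaf of the $\calU$-example is the intersection of a vertical line with $\calU$ and is a Brouwer line for $T|_{\calU}$ whose induced invariant disc is all of $\calU$, whereas in White's example the parabola-like stable leaves are not Brouwer lines; since any conjugacy carries Brouwer lines to Brouwer lines and a foliated conjugacy carries stable leaves to stable leaves, the structures are inequivalent. Your alternative --- comparing leaf spaces, noting that the stable foliation of $\calU$ is a product foliation while White's has non-Hausdorff identifications in its leaf space --- could likely be made to work and would even yield the slightly stronger conclusion that the foliations are non-homeomorphic independently of the dynamics, but as written it is a conjecture, not an argument.
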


	\begin{proofof}{1}
		We refer to the second example.  Let \map{\psi}{\opint{-1}{1}}{\opint{-\infty}{\infty}} be a 
		strictly increasing continuous function which equals the identity on \opint{-\half}{\half}
		such that $\psiof{t}\to\pm\infty$ as $t\to\pm\infty$.  Then
		\begin{equation*}
			\hof{x,y}=(x,\psiof{xy}\abs{y})
		\end{equation*}
		is a homeomorphism of \calV{} onto \Realstwo{}, and
		\begin{multline*}
			\hof{\Tof{x,y}}=\hof{2x,y/2}=(2x,\psiof{xy}\abs{y}/2)\\=\Tof{x,\psiof{xy}\abs{y}}=\Tof{\hof{x,y}}
		\end{multline*}
		so it conjugates the linear hyperbolic automorphism $T$ with itself.
		
		However, the corresponding Anosov structures are not equivalent.  
		The stable \resp{unstable} leaves of the example in \calV{} are 
		the intersections with \calV{} of vertical \resp{horizontal} lines, and it is clear that the stable \resp{unstable} 
		leaf through a point $(\xso,\yso)$ off the coordinate axes does not intersect the unstable \resp{stable} leaf 
		through any point with $\abs{x}>\abs{1/y}$ \resp{$\abs{y}\abs{1/x}$}.  
		But in the standard Anosov structure for $T$, the stable \resp{unstable} leaf through a point is the 
		vertical \resp{horizontal} line through that point, and so \emph{every} stable leaf intersects \emph{every}
		unstable leaf in this structure. Since intersection of leaves is an invariant of homeomorphism, the two
		Anosov structures are not equivalent.
   
	\end{proofof}
	
	\begin{proofof}{2}
		In the fixed-point free case, we invoke the celebrated \emph{Translation Theorem} of Brouwer \cite{Brouwer}. 
		He showed that given a fixed-point free
		orientation-preserving homeomorphism of the plane, through every point there is an 
		embedded line which separates its pre image from its image; this is sometimes called a
		\deffont{Brouwer line}.  The region bounded by a Brouwer line and its image is a kind of 
		fundamental domain: its images (in forward and backward time) fill out an invariant open disc 
		for which the restriction of the homeomorphism is topologically conjugate to the horizontal
		translation $(x,y)\to(x+1,y)$.  Brouwer lines are clearly taken to Brouwer lines by any 
		conjugacy.
		
		In the first example, any vertical line intersects \calU{} in a Brouwer line for the restriction of the
		transformation $T$ to \calU, and the invariant
		open disc it induces is clearly all of \calU.  These intersections are the stable leaves of the Anosov
		structure constructed in that example.  
		
		By contrast, if we refer to Figure 1 in White's paper \cite[p. 669]{White} we see that 
		(even though \emph{some}  stable leaves are Brouwer lines)
		there are others (the parabola-like ones) which are not Brouwer lines.%
		\footnote{A similar phenomenon occurs in our last example, in \refer{sec}{parallel} (see \refer{fig}{frame}), 
		as well as the example in \refer{sec}{access} illustrated by \refer{fig}{stmD}. }
		However,  the underlying transformation is by construction a parallel translation.
	\end{proofof}


\section{Accessibility: More Examples}\label{sec:access}

An invariant of our equivalence relation between Anosov structures is the structure of \emph{accessibility conditions}, 
generalizing the observation which distinguished our second example from linear hyperbolic automorphisms of the
whole plane.  Given the pair of transverse foliations \Ffs{} and \Ffu{} coming from an Anosov structure, we say $q$ is
\deffont{$\boldsymbol{n}$-accessible} from $p$ if there is a path from $p$ to $q$ consisting of arcs of leaves in 
\Ffs{} and \Ffu{}--that is, there is a finite sequence of points 
\begin{equation*}
	p=\ps{0},\ps{1},\dots,\ps{n}=q
\end{equation*}
such that for $i=1,\dots,n-1$, \ps{i} and \ps{i+1} lie in the same stable or unstable leaf.

\begin{remark}\label{rmk:access}
	Given an Anosov structure on \Realstwo{}, for every pair of points $p,q\in\Realstwo$ there exists $n=n(p,q)$
	such that $q$ is $n$-accessible from $p$.
\end{remark}

To see this, fix $p$ and let \calAs{n} be the set of points which are $n$-accessible from $p$.  Since $n$-accessibility 
implies $k$-accessibility for every $k>n$,  these form a nested, increasing family of subsets of \Realstwo
\begin{equation*}
	\calAs{n}\subset\calAs{n+1}.
\end{equation*}
Also, if a point is $n$-accessible from $p$, then by the local product structure (\ie{} transversality) of \Ffs{} and
\Ffu{}, its stable \resp{unstable} leaf intersects the unstable \resp{stable} leaf of every point in a (product) 
neighborhood.  Thus
\begin{equation*}
	\calAs{n}\subset\interior\calAs{n+1}
\end{equation*}
from which it follows that 
\begin{equation*}
	\calAs{\infty}\eqdef\bigcup_{n\in\Integers}\calAs{n}
\end{equation*}
is open.
However, \calAs{\infty} is also closed: if $\qs{i}\in\calAs{\ns{i}}$ converge to $q$, 
then since every point has a (product) neighborhood
$U$ of points from which it is $2$-accessible, once  we have $\qs{i}\in U$, we also have
$q\in\calAs{\ns{i}+2}\subset\calAs{\infty}$.
Thus by connectedness of \Realstwo, $\calAs{\infty}=\Realstwo$.

If we define, for each pair of points $p,q\in\Realstwo$
\begin{equation*}
	\access{p}{q}=\min\setbld{n}{q \text{ is $n$-accessible from }p}
\end{equation*}
then this number can vary with the pair of points, but its supremum over all pairs of points, which we can call the
\deffont{degree of inaccessibility} of the Anosov structure, is an invariant of foliated conjugacy.

\begin{theorem}\label{thm:access}
	There exist Anosov structures with arbitrarily high finite degree of inaccessibility.  These can be chosen
	to be fixed-point free or to have a fixedpoint.
\end{theorem}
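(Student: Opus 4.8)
The plan is to produce, for each integer $k\ge 2$, a $T$-invariant open disc $D_{k}\subset\Realstwo$ carrying a complete $T$-invariant metric of the form \refer{eqn}{metric}, for which the horizontal and vertical foliations (intersected with $D_{k}$) have degree of inaccessibility at least $k$ and at most some finite $N(k)$. Pushing this structure forward to \Realstwo{} by the Riemann mapping theorem, exactly as in the two examples of \refer{sec}{examples}, then yields the desired Anosov structures. The analytic half of the work is essentially free: once $D_{k}$ is an open disc on which the scaling function $g$ of \refer{eqn}{metric} blows up at every boundary arc with divergent endpoint integrals, completeness follows verbatim from the argument proving \refer{lem}{complete} (comparison with lower sums for the relevant improper integral), so the whole problem reduces to a combinatorial--topological design of the \emph{shape} of $D_{k}$.

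The key idea for the shape is a self-similar ``staircase''. I would build $D_{k}$ as a $T$-invariant region that, within each geometric scale, consists of $k$ overlapping horizontal ``shingles'' stacked in the $y$-direction, together with one distinguished long leaf (a ``highway''). The design is governed by two opposing requirements. First, \emph{every vertical leaf must be short}, meeting at most two consecutive shingles, and every horizontal leaf except the highway must likewise be confined to a single shingle; this is arranged by cutting the complementary slits so that no coordinate line runs far inside $D_{k}$ except along the highway. Second, \emph{consecutive shingles must overlap}, so that a single short vertical transfer moves between adjacent levels; this keeps $D_{k}$ connected, and hence, by \refer{rmk}{access}, all pairs remain finitely accessible. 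Self-similarity under $T$ is imposed by repeating the $k$-shingle pattern across the scales on which $T$ acts, and the origin is either included (placing the fixed point on the highway, giving the fixed-point case in the spirit of \calV) or excluded (building the pattern inside a quadrant, giving the fixed-point-free case in the spirit of \calU).

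With this shape in hand the two bounds should fall out cleanly. For the \emph{lower} bound I would introduce a ``level'' function $L\colon D_{k}\to\Reals$ counting shingle depth, designed so that $L$ changes by at most $1$ along any single leaf (this is exactly what the shortness of leaves buys); since each step of an accessibility path lies in one leaf, $L$ changes by at most $1$ per step, so choosing $p$ on the highway ($L=0$) and $q$ in a top shingle ($L=k$) forces $\access{p}{q}\ge k$. For the \emph{upper} bound (finiteness) I would observe that from any point one can descend to the highway in at most $k$ transfers, travel freely along the highway, and re-ascend in at most $k$ transfers, giving a uniform bound $N(k)$ and hence a finite degree of inaccessibility.

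The main obstacle is building a single region $D_{k}$ that meets all of these constraints at once. The tension is that shortness of leaves (needed for the level monovariant, and thus for degree $\ge k$) pulls against connectivity and the presence of a reachable highway (needed for finiteness), and both must be reconciled with keeping $D_{k}$ simply connected \emph{and} invariant under the expanding--contracting map $T$. In particular, because $T$ relates the scales, I must check that the self-similarity does not secretly create a long leaf, or a short-cutting path, that collapses $L$ and defeats the lower bound; verifying that $L$ genuinely changes by at most one along \emph{every} leaf---including leaves that wander between scales---is the delicate point, and is where the explicit geometry of the slits (and the accompanying figures) does the real work.
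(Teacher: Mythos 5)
Your strategy is essentially the paper's: both keep the map $T$ and the metric recipe of \refer{sec}{examples} fixed and put all of the work into the \emph{shape} of the invariant open disc, so that the coordinate leaves are cut into short pieces and any stable/unstable chain between suitably chosen points is forced to alternate many times; completeness is then inherited verbatim from the argument of \refer{lem}{complete}, and the structure is transported to \Realstwo{} by the Riemann mapping theorem. Your ``level monovariant'' is a slightly cleaner bookkeeping device for the paper's lower-bound count (each successive obstruction costs at least two more arcs), and your descend-to-the-highway-and-reascend argument corresponds to the paper's remark that sufficient regularity of the region keeps the degree finite.

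The one step you flag as the main obstacle --- producing a simply connected, $T$-invariant region with the required slit pattern without the self-similarity creating cross-scale shortcuts --- is precisely where the paper's device is worth adopting in place of your global ``staircase.'' Rather than designing the whole region at once, the paper modifies a single fundamental domain $\mDs{1}=\setbld{(x,y)}{1\leq x\leq 2,\ 1\leq xy\leq 2}$: it attaches a thin neighborhood of an oscillating arc \Curve{} (a ``whisker'' with $N$ local extrema, maxima increasing and minima decreasing), kept inside the band $\opint{1}{2}\times\Reals$, via a diffeomorphism $\Phi$ supported near \Curve, and then takes $\calVC=\bigcup_{n}T^{n}\left(\interior\Phiof{\mDs{1}}\right)$. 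Because the modification is confined to one fundamental band, the $T$-iterates cannot interact, so invariance and simple connectivity are automatic and no leaf can ``wander between scales''; the points $\ps{j}$ at a common height $c$, separated by the humps of the whisker, then give $\access{\ps{0}}{\ps{k}}\geq 2k+1$. Until you exhibit an explicit $D_{k}$ and verify your level function changes by at most one along \emph{every} leaf, your write-up is a plan rather than a proof; the whisker construction is the missing concrete realization.
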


\begin{proof}
	We construct examples by modifying the examples of the previous section.  We will work with the first
	(fixedpoint-free) example for definiteness, but it will be clear how to make an analogous change in the 
	second example (with a fixed point).	
	Recall the set 
	\begin{equation*}
		\mDs{1}=\setbld{(x,y)}{1\leq x \leq2,\ 1\leq xy\leq 2};
	\end{equation*}
	the images of \mDs{1} abut along the lines $x=2^{n},\ n\in\Integers$ 
	and fill out \calV{} together with its upper and lower boundaries, the curves $xy=1$ and $xy=2$.
	Suppose the arc \Curve{} is a 
	``whisker'' for \mDs{1} (one endpoint is in\mDs{1} (say on the curve $xy=1$) and the rest is exterior to \mDs{1}.
	We can construct a diffeomorphism $\Phi$ of \Realstwo{} which is the identity at all points at distance \epsgo{}
	or more from \Curve{} and which takes \mDs{1} to the union of itself and a neighborhood 
	of \Curve.  
	Let
	\begin{equation*}
		\stmD=\interior\Phiof{\mDs{1}}.
	\end{equation*}
	 
	\begin{figure}[htbp]
		\begin{center}
			\begin{pspicture}(1,-0.5)(7,3.6)
				\psset{xunit=3, yunit=1.3}
				\psline(1,0.5)(1,2)\psline[linestyle=dashed](1,2)(1,3.2)
				\psplot{1}{2}{2 x div}
				\psline(2,0.25)(2,1)\psline[linestyle=dashed](2,1)(2,3.25)
				\psplot{1}{2}{0.5 x div}
				\pscurve[linewidth=1.5pt](1.3,1.52)(1.5,2.5)(1.55,2.7)(1.6,2.5)(1.65,2.2)(1.7,2.5)(1.75,3.0)(1.8,2.5)
				\pscurve[linewidth=.6pt](1.2,1.66)(1.253, 1.7)(1.3,1.8)(1.35,2.0)(1.4,2.2)(1.45,2.5)%
				(1.55,2.9)(1.62,2.6)(1.65,2.4)(1.68,2.6)(1.69,3)(1.75,3.2)(1.831,2.8)(1.83,2.5)(1.8,2.3)%
				(1.77,2.5)(1.75,2.78)(1.73,2.5)(1.65,2.1)(1.58,2.2)(1.55,2.55)(1.53,2.4)(1.5,2.2)(1.45,2.0)(1.4,1.7)(1.45,1.38)
				
				\rput(1.5,1){\mDs{1}}
				\uput[d](1,0){$x=1$}
				\uput[d](2,0){$x=2$}
				\uput[l](1,2){$xy=2$}
				\uput[l](1,0.5){$xy=1$}
				\rput(1.2,2.8){\Phiof{\mDs{1}}}\psline{->}(1.32,2.6)(1.4,2.3)\psline{->}(1.2,2.6)(1.1,1.9)
				\rput(1.25,0.7){\Curve}\psline{->}(1.25,0.98)(1.3,1.4)
		
	\end{pspicture}
		\caption{\stmD}
		\label{fig:stmD}
		\end{center}
	\end{figure}

	 If we start with a whisker \Curve{} contained in the band $\opint{1}{2}\times\Reals$, then we can make sure that
	 the part of \stmD{} outside \mDs{1} is also in this band.  Then a new open disc invariant under $T$ is the set
	 \begin{equation*}
	 	\calVC\eqdef\bigcup_{n\in\Integers}T^{n}\left(\stmD\right)
	 \end{equation*}
	 and $\calVC\cap\left(\clint{1}{2}\times\Reals\right)$ is a fundamental domain for the restriction of $T$ to \calVC.
	 
	 We can then use the diffeomorphism $\Phi$ to ``push forward'' the function $g$ in \refer{eqn}{metric}
	 to \stmD{}, and then use $T$ to define it to be invariant on the rest of \calVC.  It will be complete 
	 and Anosov by the
	 same arguments as we used before.
	 
	 Suppose that part of \Curve{} outside \mDs{1} is the graph of a function $\psi$ 
	 defined on $\clint{a}{b}\subset\opint{1}{2}$
	 with exactly $N$ local extrema; assume these occur at the points 
	 \begin{equation*}
	 	(\xs{2k+1},\ys{2k+1}), \quad a<\xs{1}<\cdots<\xs{2N-1}<b
	\end{equation*}
	 with maximum values increasing ($\ys{2k+1}<\ys{2k+5}$ for $k$ odd) and minimum values decreasing
	 ($\ys{2k+1}>\ys{2k+5}$ for $k$ even). Finally, assume some intermediate value 
	 $\ys{0}=\ys{2}=...=\ys{2N}=c$ occurs precisely at the points
	 \begin{equation*}
	 	a=\xs{0}<\xs{2}<\cdots<\xs{2N}=b.
	 \end{equation*}
	 We pick \eps{}  so that
	 $c$ does not belong to any of the closed intervals \clint{\ys{2k+1}-\eps}{\ys{2k+1}+\eps}.  This makes sure
	 that no horizontal line segment in \stmD{} can contain points near two different extrema.
	Write \ps{j} in place of $(\xs{2j},c)$.
	 
	 We \emph{claim}:
	 \begin{equation*}
	 	\access{\ps{0}}{\ps{k}}\geq2k+1.
	\end{equation*}
	To see this, note that the unstable leaf through \ps{j} is a component of the horizontal line $y=c4$ 
	which does not reach \ps{j+1}. Thus the most efficient way to get from \ps{j} to \ps{j+1} is to 
	move vertically along the stable leaf through \ps{j} until we ``clear'' the height $\ys{2j+1}\pm\eps$ (if possible)
	then move horizontally along an unstable leaf until we reach the stable leaf of \ps{j+1} and move vertically
	to reach it.  (It is possible that if we are too far to the left of the next extremum, the vertical leaf through \ps{j} 
	does not reach ``high'' enough to clear the hump, in which case we need to take additional horizontal steps
	to bring us closer to the hump).	
	Since each such step starts and ends with a vertical motion, it may be possible to ``meld'' the last arc in a given
	step with the first arc in the next, so our estimate from below counts each step as 2 arcs, plus the first one.
	
	With this we see that examples can be constructed with arbitrarily high degree of inaccessibility. But
	it is fairly easy to see that by making \Curve{} and \stmD{} sufficiently regular, we can insure that the degree is
	finite in each individual case.
	\end{proof}

\section{And Now for Something Completely Different...}\label{sec:parallel}

The examples constructed in \refer{sec}{examples} and \refer{sec}{access} give a wealth of 
Anosov diffeomorphisms in the plane (with or without a fixed point) with inequivalent Anosov structures.  
These examples are all built on the restriction of the hyperbolic linear automorphism $T:(x,y)\mapsto(2x,y/2)$
to an invariant open disc, and it is natural to ask if every Anosov structure in the plane is equivalent to one given
by such a restriction.  We shall answer this question in the negative, by constructing a foliation
in the plane which cannot be taken by a homeomorphism to a foliation of some open disc by horizontal (ord vertical) lines.

We will say that a foliation of \Realstwo{} is \deffont{quasi-parallel}
if there is homeomorphism taking \Realstwo{} onto an open disc \calU{} and taking each leaf to a component 
of the intersection of \calU{} with some foliation by parallel lines (which, by appropriate choice of the homeomorphism, 
we can take to be horizontal).  
Note that this is quite distinct from parallelizability of the 
foliation, which is the same as the existence of a global cross-section to the foliation (an embedded line which meets
each leaf of the foliation exactly once, transversally). A parallelizable foliation is homeomorphic to the horizontal 
foliation of the open square $\opint{0}{1}\times\opint{0}{1}$, but there are non-parallelizable foliations which are
quasi-parallel.

The simplest example of this is a foliation of \Realstwo{} with a single \emph{Reeb component},
for example consisting of all vertical lines $x=a$ for $\abs{a}\geq\frac{\pi}{2}$ together with the curves
$y=c+\sec x$, $-\frac{\pi}{2}<x<\frac{\pi}{2}$, $c\in\Reals$ (\refer{fig}{reeb}).

\begin{figure}[htbp]
\begin{center}
	\begin{pspicture}(-2,-2)(2,2)
		\psset{arrows=->}
		\multido{\rx=0.9+0.2}{10}{
			\psline(\rx,-2)(\rx,2)
			\psline(-\rx,2)(-\rx,-2)
			}
			\psplot{-0.7}{0.7}{1 x Pi mul 2 div COS div -0.5 add}
			\psplot{-0.75}{0.75}{1 x Pi mul 2 div COS div -1.0 add}
			\psplot{-0.8}{0.8}{1 x Pi mul 2 div COS div -1.5 add}
			\psplot{-0.8}{0.8}{1 x Pi mul 2 div COS div -2.0 add}
			\psplot{-0.8}{0.8}{1 x Pi mul 2 div COS div -2.5 add}
			\psplot{-0.8}{0.8}{1 x Pi mul 2 div COS div -3.0 add}
			
			\psline[linewidth=1.5pt](0.9,-2)(0.9,2)
			\psline[linewidth=1.5pt](-0.9,2)(-0.9,-2)
			
			\psline[linestyle=dashed](0,2)(0,-2)
			
			\rput*(0,0){(a)}
			\rput*(-2,0){(b)}
			\rput*(2,0){(c)}

	\end{pspicture}
\caption{Reeb component}
\label{fig:reeb}
\end{center}
\end{figure}
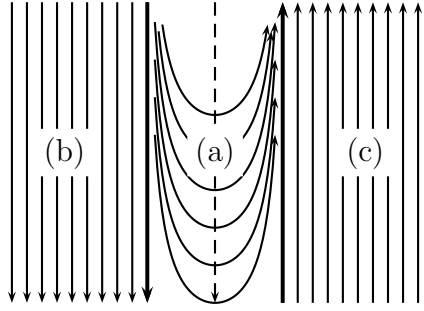

No cross-section can join the two vertical leaves at the edge of the Reeb component (the region marked (a)), 
so the foliation is not parallelizable.
However, the dashed vertical line down the middle of the Reeb component intersects every leaf 
interior to the Reeb component, so the restriction of the foliation to this open strip is parallelizable.  
By mapping this cross-section
to the open interval $\single{0}\times\opint{0}{1}$, we can clearly find a homeomorphism taking leaves of the Reeb component
to horizontal lines in the open square
$\opint{-1}{1}\times\opint{0}{1}$ and  the two edges of this component to the open intervals
$\opint{-1}{0}\times\single{0}$ and $\opint{0}{1}\times\single{0}$. We can then extend this homeomorphism so as to take
the regions marked (b) and (c) (each of which is individually parallelizable) into open triangles abutting these
two segments (\refer{fig}{Ureeb}).

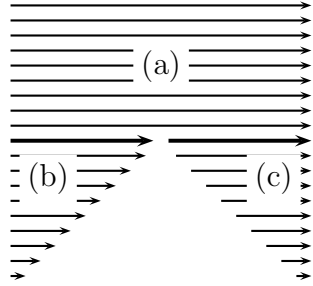
\begin{figure}[htbp]
\begin{center}
	\begin{pspicture}(-2,-2)(2,2)
		\psset{arrows=->}
		\multido{\rx=0.2+0.2}{9}{
			\psline(-2,\rx)(2,\rx)
			\psline(-2,-\rx)(!\rx\space  neg \rx\space neg)
			\psline(!\rx\space  \rx\space neg)(2,-\rx)
			}
			\psline[linewidth=1.5pt](-2,0)(-0.1,0)
			\psline[linewidth=1.5pt](0.1,0)(2,0)
			
			\rput*(0,1){(a)}
			\rput*(-1.5,-0.5){(b)}
			\rput*(1.5,-0.5){(c)}

	\end{pspicture}
\caption{Quasi-parallelization of Figure \ref{fig:reeb} }
\label{fig:Ureeb}
\end{center}
\end{figure}

To understand the situation further, it is useful to take advantage of the fact that the leaves of 
any foliation of the plane can be viewed as the orbit lines of some fixedpoint-free flow $\phi^{t}$ on \Realstwo
(\cite[Corollary to Thm. 42, p. 185]{Kaplan1}).
This allows us to introduce the idea of \emph{prolongational limit sets}:
we say that a point $q\in\Realstwo$ is a \deffont{forward prolongational limit} 
\resp{\deffont{backward prolongational limit}} of $p\in\Realstwo$ if there is a sequence of points $\ps{i}\to p$ 
and times $\ts{i}\to\infty$ \resp{$\ts{i}\to-\infty$} such that $\qs{i}=\phi^{\ts{i}}(\ps{i})\to q$.  
The set of all forward \resp{backward} prolongational limit points of $p$ is denoted $\boldsymbol{\prolong_{+}(p)}$
\resp{$\boldsymbol{\prolong_{-}(p)}$}.  In \refer{fig}{reeb3}, for each point $p$ on the left edge of either Reeb component,
\prolim{+}{p} consists of the right edge of the same Reeb component. 
\footnote{Note that the definition of non-wandering point could be written $p\in\prolim{+}{p}$.}
The existence of a nonempty prolongational limit set is the obstacle to parallelizability of a flow in the plane.

Prolongation allows us to make a subtle distinction which directly affects quasi-parallelizability.  
Consider the situation of two Reeb components, with the interior leaves in each curling up, as in \refer{fig}{reeb3},
and separated by a single orbit.

\begin{figure}[htbp]
\begin{center}
	\begin{pspicture}(-2,-2)(2,2.5)
		\psset{arrows=->}

			\psplot{-.85}{-0.15}{1 x 1 add Pi mul SIN div -0.5 add}
			\psplot{-0.88}{-0.17}{1 x 1 add Pi mul SIN div -1.0 add}
			\psplot{-0.9}{-0.1}{1 x 1 add Pi mul SIN div -1.5 add}
			\psplot{-0.9}{-0.1}{1 x 1 add Pi mul SIN div -2.0 add}
			\psplot{-0.9}{-0.1}{1 x 1 add Pi mul SIN div -2.5 add}
			\psplot{-0.9}{-0.1}{1 x 1 add Pi mul SIN div -3.0 add}
			
			\psline[linewidth=1.5pt](1,2)(1,-2)

			\psline[linewidth=1.5pt](-1,2)(-1,-2)
			
			\psline[linewidth=1.5pt](0,-2)(0,2)

	\psset{arrows=<-}
			\psplot{0.15}{0.85}{1 x 1 add Pi mul SIN div -0.5 sub neg}
			\psplot{0.17}{0.88}{1 x 1 add Pi mul SIN div -1.0 sub neg}
			\psplot{0.1}{0.9}{1 x 1 add Pi mul SIN div -1.5 sub neg}
			\psplot{0.1}{0.9}{1 x 1 add Pi mul SIN div -2.0 sub neg}
			\psplot{0.1}{0.9}{1 x 1 add Pi mul SIN div -2.5 sub neg}
			\psplot{0.1}{0.9}{1 x 1 add Pi mul SIN div -3.0 sub neg}

			\rput*(-0.5,0){(a)}
			\uput[u](0,2){(c)}
			\rput*(0.5,0){(b)}

	\end{pspicture}

\caption{Two similarly-oriented Reeb components separated by one leaf}
\label{fig:reeb3}
\end{center}
\end{figure}
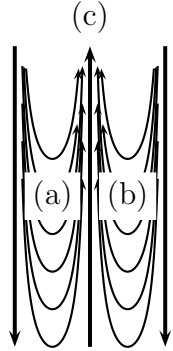

We claim this cannot be part of a quasi-parallel foliation.
To see this, note that in a quasi-parallelized picture, the horizontal lines must all be oriented in the same direction,
which we have taken to be left-to-right.  Consider the orbit (c) separating the two Reeb components, and suppose 
it maps to the open interval $I=\opint{\alpha}{\beta}$, which we can take on the \axis{x}.  Since the orbit on the
left edge of (a) is the \emph{backward} prolongational limit of these points, the orbits in (a) map to line segments extending to 
the left of $\alpha$, and the separating orbit maps to a segment \Js{1} of the real line to the left of $I$.
Furthermore, since orbits in (a) see (c) on their \emph{right} side, the image of (a) is in the upper half plane and bounded
below by a part of the axis that spans the gap between $I$ and \Js{1}.  However, (c) \emph{also} has
the right edge of (b) in its backward prolongational limit, so the orbits of (b) must also extend to the left of the
image of (c), the right edge of (b) must also map to a segment \Js{2} of the axis \emph{also} to the left of $\alpha$, 
and the image of (b) must be in the upper half plane and be bounded below by a segment of the axis which 
spans the gap between $I$ and \Js{2}.  The shorter of the two gaps is spanned by the lower edge of
the image of both regions (a) and (b), and both lie in the upper half plane.  It follows that they must intersect,
a contradiction to the fact that these are images under a homeomorphism of the whole plane into the plane.

To construct an example of an Anosov structure on the plane which is not equivalent to a restriction 
of the hyperbolic linear automorphism to an invariant disc, 
it suffices to construct an example for which one of the two foliations exhibits such a configuration.

Our example is an adaptation of Warren White's example in \cite{White} of an Anosov structure for the translation
$(x,y)\mapsto(x+2,y)$.  The basis of his example is to construct a smooth frame field (a pair of orthonormal vectors at
each point) $(\vstable(x,y),\vunstable(x,y))$ which is invariant under
all vertical translations), but rotates as the point moves horizontally
in such a way that it is invariant under a horizontal translation by one unit%
\footnote{For White's example, the unit is 2, but we will build one using unit 1.}
He also makes sure that there is a nontrivial interval of $x$-values for which \vstable{} is horizontal and another for which
\vunstable{} is horizontal.  Then given $0<\lambda<1$, the Riemann metric for which 
the inner product of a pair of vectors \vu{} and \vv{}
at $(x,y)$ is defined in terms of the Euclidean inner product by
\begin{equation*}
	\muprod{\vu}{\vv}=\lam^{2x}(\vu\cdot\vstable)(\vv\cdot\vstable)+\lam^{-2x}(\vu\cdot\vunstable)(\vv\cdot\vunstable)
\end{equation*}
gives an Anosov structure for the horizontal translation by one unit.  The hyperbolicity  of this metric is clear;
we sketch the proof that it is complete, following his argument in \cite{White}.

Let $I$ \resp{$J$} be an interval such that \vstable{} \resp{\vunstable} is horizontal at all points in the band
$\As{0}=I\times\Reals$ \resp{$\Bs{0}=J\times\Reals$}.  Let us assume that $I$ and $J$ are both contained in \opint{0}{1}
(this will simplify some notation but not substantially alter the argument) and for \inZ{n} let \As{n} \resp{\Bs{n}}
be the translate of \As{0} \resp{\Bs{0}} in $\opint{n}{n+1}\times\Reals$.  We \emph{claim} that for $n<0$ \resp{$n\geq0$} 
the width in the new metric of the band \As{n} \resp{\Bs{n}} exceeds the Euclidean length of $I$ \resp{$J$}.
If \vgamof{t} is a smooth curve connecting the left edge of the appropriate band to the right edge 
(and which we can assume to be contained in the closure of this band) then its speed in the new metric 
is at least $\max\{\lam^{x}\abs{\vgampof{t}\cdot\vstable},\lam^{-x}\abs{\vgampof{t}\cdot\vunstable}\}$,
where \abs{\vv} is the Euclidean length of \vv{}.  For a point in \As{n}, the first of these is $\lam^{x}\abs{\xpof{t}}$,
which  (since $0<\lam<1$) for $n<0$  exceeds the (Euclidean) horizontal speed, 
while in \Bs{n} and $n\geq0$ the second one
exceeds the (Euclidean) horizontal speed.  Integrating the (new) speed, we see that any curve crossing \As{n}, $n<0$
\resp{\Bs{n}, $n\geq{0}$} has length at least equal to the Euclidean width of this band.  
In particular, the width of the band $\opint{n}{n+1}\times\Reals$ in the new metric is at least equal to the (Euclidean)
length of the shorter of $I$ and $J$.
Thus, any sequence  of points which is Cauchy (hence bounded) in the new metric stays within a closed
finite vertical band $\clint{-n}{n}\times\Reals$.  Since the new metric is invariant under vertical  translations
and \clint{-n}{n} is compact, we can find uniform bounds on the distortion of lengths of vectors--for some $\Cs{n}>1$,
every vector \vv{} at a point of  $\clint{-n}{n}\times\Reals$ has new length between $1/\Cs{n}$ times its Euclidean length
and \Cs{n} times this length.  This implies analogous estimates on the ratio between the new and Euclidean distance
between two points in $\clint{-n}{n}\times\Reals$.  In particular it says that a sequence in this band converges in the 
new metric if and only if it converges in the Euclidean metric, and proves completeness.

For our version of this construction, start with a smooth function \tof{x} (\refer{fig}{tofx}) satisfying
\begin{itemize}
	\item $\tof{x}=\begin{cases}
				      & 0\text{ on } \clint{0}{0.1}, \\
				      &\frac{\pi}{2} \text{ on }, \clint{0.2}{0.4}\\
				      & \pi \text{ at } x=0.5\text{ (\emph{only})}, \\
				      & \frac{3\pi}{2} \text{ on } \clint{0.6}{0.8}, \\
				      & 2\pi \text{ on } \clint{0.9}{1.0} \\
				\end{cases}$
	\item \tof{x} is strictly increasing on each of the intervals \clint{01.}{0.2}, 
	\clint{0.4}{0.6}, and \clint{0.8}{0.9} 
	\item $\tof{x+1}=\tof{x}+2\pi$ (so $\tof{x+n}=\tof{x}+2n\pi$) for all integers $n$.
\end{itemize}

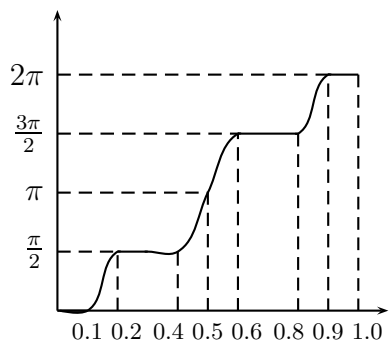
\begin{figure}[htbp]
\begin{center}
	\begin{pspicture}(0,-0.5)(1,4)
	\psset{xunit=4}
		\psline{->}(0,0)(1.1,0)		
		\psline{->}(0,0)(0,4)		
		
		\psline(0,0)(0.1,0)
		\pscurve(0.009,0)(0.1,0)(0.2,0.785)(0.21,0.785)
		\psline(0.2,0.785)(0.3,0.785)
		\pscurve(0.29,0.785)(0.3,0.785)(0.4,0.785)(0.5,1.57)(0.6,2.355)(0.61,2.355)
		\psline(0.6,2.355)(0.7,2.355)(0.8,2.355)
		\pscurve(0.79,2.355)(0.8,2.355)(0.9,3.14)(0.91,3.14)
		\psline(0.9,3.14)(1.0,3.14)
		
		\psline[linestyle=dashed](0,0.785)(0.2,0.785)(0.2,0)
		\psline[linestyle=dashed](0.4,0.785)(0.4,0)
		\psline[linestyle=dashed](0,1.57)(0.5,1.57)(0.5,0)
		\psline[linestyle=dashed](0,2.355)(0.6,2.355)(0.6,0)
		\psline[linestyle=dashed](0.8,2.355)(0.8,0)
		\psline[linestyle=dashed](0,3.14)(0.9,3.14)(0.9,0)
		\psline[linestyle=dashed](1.0,3.14)(1.0,0)
		
		\uput[l](0,0.785){$\frac{\pi}{2}$}
		\uput[l](0,1.57){$\pi$}
		\uput[l](0,2.355){$\frac{3\pi}{2}$}
		\uput[l](0,3.14){$2\pi$}
		
		\uput[d](0.1,0){\scriptsize$0.1$}
		\uput[d](0.23,0){\scriptsize$0.2$}
		\uput[d](0.37,0){\scriptsize$0.4$}
		\uput[d](0.5,0){\scriptsize$0.5$}
		\uput[d](0.63,0){\scriptsize$0.6$}
		\uput[d](0.77,0){\scriptsize$0.8$}
		\uput[d](0.9,0){\scriptsize$0.9$}
		\uput[d](1.03,0){\scriptsize$1.0$}
		
	\end{pspicture}
\caption{The function \tof{x}}
\label{fig:tofx}
\end{center}
\end{figure}

We then define the frame by
\begin{align*}
	\vstable(x,y)&=(\cos\tof{x},\sin\tof{x})\\
	\vunstable(x,y)&=(\cos\left(\tof{x}+\frac{\pi}{2}\right),\sin\left(\tof{x}+\frac{\pi}{2}\right)).
\end{align*}

In \refer{fig}{frame}, we have sketched the typical orientation of the frame in each of the intervals of definition
of \tof{x} (\vstable{} is light, \vunstable{} is dark ), as well as a typical leaf of the unstable foliation. 

\begin{figure}[htbp]
\begin{center}
	\begin{pspicture}(0,0)(10,4)
		\psline[linestyle=dashed](2,1)(2,4)
		\psline[linestyle=dashed](4,1)(4,4)
		\psline[linewidth=1.5pt, arrows=->](5,4)(5,1)
		\psline[linestyle=dashed](6,1)(6,4)
		\psline[linestyle=dashed](8,1)(8,4)
		
		\rput(0.5,0){\color{lightgray}{\psline[arrows=->](0,0)(0.4,0)} \psline[linewidth=1.5pt, arrows=->](0,0)(0,0.4)}
		\rput{45}(1.5,0){\color{lightgray}{\psline[arrows=->](0,0)(0.4,0)} \psline[linewidth=1.5pt, arrows=->](0,0)(0,0.4)}
		\rput{90}(3.0,0){\color{lightgray}{\psline[arrows=->](0,0)(0.4,0)} \psline[linewidth=1.5pt, arrows=->](0,0)(0,0.4)}
		\rput{135}(4.5,0){\color{lightgray}{\psline[arrows=->](0,0)(0.4,0)} \psline[linewidth=1.5pt, arrows=->](0,0)(0,0.4)}
		\rput{180}(5,0){\color{lightgray}{\psline[arrows=->](0,0)(0.4,0)} \psline[linewidth=1.5pt, arrows=->](0,0)(0,0.4)}
		\rput{225}(5.5,0){\color{lightgray}{\psline[arrows=->](0,0)(0.4,0)} \psline[linewidth=1.5pt, arrows=->](0,0)(0,0.4)}
		\rput{270}(7.0,0){\color{lightgray}{\psline[arrows=->](0,0)(0.4,0)} \psline[linewidth=1.5pt, arrows=->](0,0)(0,0.4)}
		\rput{315}(8.5,0){\color{lightgray}{\psline[arrows=->](0,0)(0.4,0)} \psline[linewidth=1.5pt, arrows=->](0,0)(0,0.4)}
		\rput(9.5,0){\color{lightgray}{\psline[arrows=->](0,0)(0.4,0)} \psline[linewidth=1.5pt, arrows=->](0,0)(0,0.4)}
		
		\psBezier5[linewidth=1.5pt, arrows=->](1.1,4.0)(1.1,3.0)(1.5,1.5)(1.6,1)(1.8,1)(2,1)
		\psline[linewidth=1.5pt](2,1)(4,1)
		\psBezier5[linewidth=1.5pt](4,1)(4.2,1)(4.4,1)(4.5,1.5)(4.9,3)(4.9,4)
		
		\psBezier5[linewidth=1.5pt](5.1,4.0)(5.1,3.0)(5.5,1.5)(5.6,1)(5.8,1)(6,1)
		\psline[linewidth=1.5pt, arrows=->](6,1)(8,1)
		\psBezier5[linewidth=1.5pt, arrows=<-](8,1)(8.2,1)(8.4,1)(8.5,1.5)(8.9,3)(8.9,4)
		
		\multido{\rx=0+.2}{6}{\psline[linewidth=1.5pt, arrows=->](\rx,1)(\rx,4)}
		\multido{\rx=9.0+.2}{6}{\psline[linewidth=1.5pt, arrows=->](\rx,1)(\rx,4)}
		
		\multido{\ry=2.2+0.2}{9}{\color{lightgray}{\psline{->}(\ry,1)(\ry,4)}}
		\multido{\rz=6.2+0.2}{9}{\color{lightgray}{\psline{<-}(\rz,1)(\rz,4)}}
		
		\color{lightgray}{
			\psline(0,1)(1,1)
			\psBezier5{<-}(1,1)(1.2,1)(1.4,1)(1.5,1.5)(1.9,3)(1.9,4)
			
			\psBezier8{->}(4.1,4)(4.1,2)(4.5,1.5)(4.9,1)(5.0,1)(5.1,1)(5.5,1.5)(5.9,2)(5.9,4)
			
			\psBezier5(8.1,4)(8.1,3)(8.5,1.5)(8.6,1)(8.8,1)(9,1)
			\psline{->}(9.0,1)(10.0,1)
			}
		
	\end{pspicture}
\caption{$\vec{e_{s}}$, $\vec{e_{u}}$, and the unstable foliation in our example}
\label{fig:frame}
\end{center}
\end{figure}
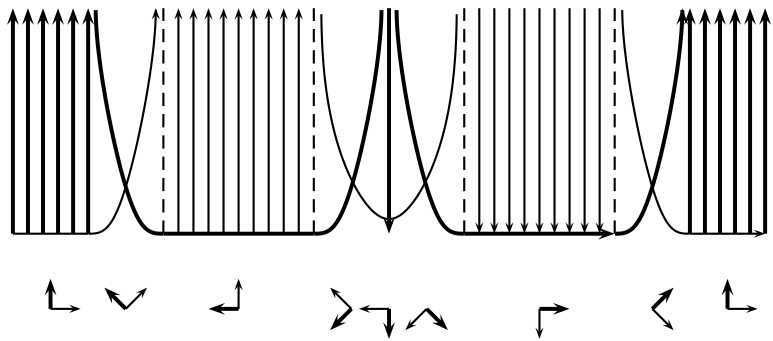

It is clear from \refer{fig}{frame} that the unstable foliation in this example is not quasi-parallel, which means this
particular Anosov structure (for the translation $(x,y)\mapsto(x+1,y)$) is not equivalent to a structure coming from
a restriction of a linear hyperbolic automorphism to an invariant open disc that excludes the origin.  
Thus, despite the variety they exhibit,
our examples in \refer{sec}{examples} and \refer{sec}{access} cannot serve as models for all Anosov structures
in the plane:

\begin{theorem}
	There exist Anosov structures for a parallel translation in the plane possessing at least one non-quasi-parallel 
	foliation, and therefore not equivalent to the restriction of a linear hyperbolic automorphism to an invariant disc
	not containing the fixed point at the origin.
\end{theorem}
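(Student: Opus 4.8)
The plan is to verify that the explicit construction preceding the statement really does produce an Anosov structure for the translation $(x,y)\mapsto(x+1,y)$, and then to locate inside its unstable foliation \Ffu{} a copy of the forbidden configuration of \refer{fig}{reeb3}, whose non-quasi-parallelity has already been established.

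First I would dispense with the two defining properties of the structure itself. Hyperbolicity is immediate: the translation has derivative the identity, and because $\tof{x+1}=\tof{x}+2\pi$ the frame is genuinely periodic, $\vstable(x+1,y)=\vstable(x,y)$ and $\vunstable(x+1,y)=\vunstable(x,y)$; hence a vector \vv{} tangent to \Ffu{} at $(x,y)$ keeps its Euclidean length under iteration but has \metric-length $\lam^{-x}\abs{\vv}$, so that $\mlen{Df^{n}\vv}=\lam^{-n}\mlen{\vv}$, and symmetrically for stable vectors, giving the required inequalities with $C=1$. Completeness is White's argument applied verbatim to our \tof{x}: since $\vstable$ is horizontal on an interval $I\subset\clint{0}{0.1}$ and $\vunstable$ is horizontal on an interval $J\subset\clint{0.2}{0.4}$, both inside \opint{0}{1}, every curve crossing a band $\opint{n}{n+1}\times\Reals$ has \metric-length at least the shorter of the Euclidean lengths of $I$ and $J$; thus a \metric-Cauchy sequence cannot run off to $x=\pm\infty$ and is trapped in a finite vertical band, where periodicity and continuity make \metric{} uniformly equivalent to the Euclidean metric and force convergence.

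Next, and this is the heart of the matter, I would read off the global structure of \Ffu{} from the frame. The unstable direction has angle $\tof{x}+\frac{\pi}{2}$, so \Ffu{} has vertical leaves on the band $\clint{0}{0.1}$ (pointing up), along the single line $x=0.5$ (pointing down), and on $\clint{0.9}{1.0}$ (pointing up), together with their integer translates, while on each intervening interval the unstable direction sweeps through a half-turn and the leaves curl. I would then show that $0.1<x<0.5$ and $0.5<x<0.9$ are each Reeb components sharing the single vertical leaf $x=0.5$ as a common boundary, and that they are \emph{similarly} oriented, i.e. that this separating leaf lies in the (backward) prolongational limit \prolim{-}{p} of every interior point $p$ of both components, exactly as in \refer{fig}{reeb3}. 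Since we proved that configuration cannot occur inside a quasi-parallel foliation, \Ffu{} is not quasi-parallel.

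Finally I would close the loop. The stable and unstable foliations of any restriction of the linear hyperbolic automorphism to an invariant open disc are, by construction, images under a homeomorphism of \Realstwo{} of the horizontal and vertical foliations intersected with that disc, hence quasi-parallel; and quasi-parallelity is manifestly preserved by any homeomorphism carrying one foliation to another, so in particular by a foliated conjugacy. As \Ffu{} is not quasi-parallel, no foliated conjugacy can join our example to any such restriction (and since our diffeomorphism is a translation, the only relevant candidates are the fixed-point-free restrictions, coming from a disc excluding the origin). I expect the one genuine difficulty to lie in the middle step: turning the visual evidence of \refer{fig}{frame} into an honest verification that the curling interior leaves really constitute two Reeb components and, above all, that these are \emph{similarly} rather than oppositely oriented --- it is precisely this orientation that separates the non-quasi-parallel picture from a harmless single Reeb component, and it must be extracted rigorously from the sign data of \tof{x} and the induced \vunstable-flow.
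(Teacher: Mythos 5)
Your proposal follows the paper's argument essentially verbatim: the paper likewise verifies hyperbolicity directly from the periodicity of the frame, proves completeness by White's band argument, and then concludes by observing that the unstable foliation exhibits the two-similarly-oriented-Reeb-components configuration of \refer{fig}{reeb3}, already shown earlier in the section not to occur in any quasi-parallel foliation (indeed the paper is less careful than you are at the crucial middle step, simply declaring the presence of that configuration ``clear from'' \refer{fig}{frame}). One small correction to that middle step: the relevant prolongational relation holds between the separating leaf $x=0.5$ and the two outer vertical edges $x=0.1$ and $x=0.9$ (both outer edges lie in \prolim{+}{q} for $q$ on the separating leaf, mirroring the paper's statement for \refer{fig}{reeb3} up to time reversal), not between the separating leaf and \emph{interior} points of the Reeb components, whose prolongational limit sets are empty since their leaves escape to infinity in both directions.
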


\bibliography{noncompact}
\bibliographystyle{amsplain}

\end{document}